\title{Distributed Nash Equilibrium Seeking in Aggregative Games over Jointly Connected and Weight-Balanced Networks}
\author{Zhaocong~Liu,
	Jie~Huang,~\IEEEmembership{Life Fellow,~IEEE}
\thanks{This work was supported  by the Research Grants Council of the Hong Kong Special Administrative Region under grant No. 14201420.}
\thanks{The authors are with the Department of Mechanical and Automation
Engineering, The Chinese University of Hong Kong, Hong Kong.}
\thanks{Corresponding Author: Jie Huang ({\tt\small  jhuang@mae.cuhk.edu.hk}).}	

	%
}
\newtheorem{theorem}{Theorem}
\newtheorem{lemma}{Lemma}
\newtheorem{remark}{Remark}
\newtheorem{assumption}{Assumption}
\newcommand{\col}{\hbox{col}}
\newcommand{\EQ}{\begin{eqnarray}}
	\newcommand{\EN}{\end{eqnarray}}
\newcommand{\EQQ}{\begin{eqnarray*}}
	\newcommand{\ENN}{\end{eqnarray*}}
\begin{document}
	
	\maketitle

	\begin{abstract}
		The problem  of the distributed Nash equilibrium seeking  for  aggregative games has been studied over strongly connected and weight-balanced static networks and every time strongly connected and weight-balanced switching networks. In this paper, we  further study the same problem over jointly connected and weight-balanced networks.
        The existing approaches critically rely on the connectedness of the network for constructing a Lyapunov function for their algorithms and theses approaches fail if the network is not connected. To overcome this difficulty, we propose an approach to show the exponential convergence of the output of the closed-loop system to the unknown Nash equilibrium (NE) point under a set of mild conditions.
	\end{abstract}

	\begin{IEEEkeywords}
		Aggregative games, exponential stability, converse Lyapunov theorem, jointly connected networks.
	\end{IEEEkeywords}

	\section{Introduction}
 The distributed  Nash equilibrium (NE) seeking problem is receiving an increasing attention from the control community  \cite{gadjov2018passivity}\cite{he2023Neurocomput}\cite{he2023tac}\cite{ye2017switching}\cite{ye2017distributed}, just to name a few.
The problem for  non-cooperative $N$-player game was studied in \cite{ye2017distributed} over static, undirected and connected networks and in \cite{ye2017switching} over every time strongly connected networks. Reference  \cite{gadjov2018passivity} further considered
the distributed  Nash equilibrium seeking problem over static, undirected and connected networks via a {\color{black}passivity-based} approach for games. In practice, the communication networks among the agents can be disconnected from time to time due to the changes of the environment or sensor/actuator failures. Thus, it is more interesting to consider switching communication networks which can be disconnected. In fact,
references \cite{he2023Neurocomput} and \cite{he2023tac} studied the distributed  Nash equilibrium seeking problem over jointly strongly connected switching networks  which can be disconnected at every time instant.

  Aggregative games are a {\color{black}subclass} of non-cooperative $N$-player games whose cost functions depend on a so-called {\color{black}aggregate function}.  Aggregative games have  been widely used to model the interaction between a group of self-interested players.
	The distributed NE seeking algorithms for aggregative games over static, undirected and connected communication networks were
proposed in, for example,  \cite{bianchi2021continuous}\cite{deng2021distributed}\cite{gadjov2020single}\cite{shakarami2022distributed}\cite{ye2016game}\cite{zhang2019distributed}.
The same problem was studied over static, strongly connected and weight-balanced networks in \cite{deng2018distributed}, over  strongly connected  and weight-balanced or weight-unbalanced directed networks in  \cite{zhu2020distributed}, and over  every time connected and undirected switching networks in \cite{liang2017distributed}.
Nevertheless, the approaches of the above mentioned papers do not apply to jointly connected switching networks because these approaches critically rely on the connectedness of the networks at every time.

In this paper, we will further consider the distributed NE seeking problem for  aggregative games over jointly connected and weight-balanced switching networks.
Compared with the existing literature, this paper offers the following features.
\begin{enumerate}[(i)]
		\item
	Our result applies to jointly connected and weight-balanced networks, which can be directed and disconnected at every time instant.  In contrast,  none of the existing methods mentioned above can handle disconnected networks.

\item Even for the special case where the graph is static, connected and weight-balanced as studied in \cite{deng2018distributed}\cite{deng2021distributed}, our main result also offers three significant improvements over the existing results  in \cite{deng2018distributed}\cite{deng2021distributed} as elaborated in Remark \ref{deng}.
  \end{enumerate}

To achieve the above advantages,  we propose a different  approach  from the existing ones in that we make use of the converse Lyapunov function theory to  construct the Lyapunov function candidate for our algorithm. For this purpose, we need to establish Lemma~\ref{lem: exponential stable},  which guarantees the exponential stability for a  time-varying ancillary  system.
	
	The rest of this paper is organized as follows.
	Section~\ref{sec:2} provides the preliminaries.
	Section~\ref{sec:3} presents the main result.
	Section~\ref{sec:6} closes the paper with some remarks.

	\indent\textbf{Notation}
	Let $\| \cdot \|: \mathbb{R}^n\to \mathbb{R}_{\geq 0}$ denote Euclidean norm and $\Vert \cdot \Vert:\mathbb{R}^{m\times n}\to \mathbb{R}_{\geq 0}$ denote the Euclidean-induced matrix norm.
	For column vectors $a_i$,  $\textrm{col} (a_1,\cdots,a_n)= [a_1^T,\cdots,a_n^T]^T$.
	For matrices $A_i$,  $\mathrm{blkdiag}(A_1,...,A_n)$ is the block diagonal matrix $
	\left[\begin{array}{ccc}
		A_1&&\\
		&\ddots&\\
		&&A_n
	\end{array}\right]$.
	$\otimes$ is the notation of the Kronecker product.
	\textcolor{black}{$\bm{1}_p$ is the $p$-dimensional column vector  with all $1$'s, $\bm{0}_{p\times q}$ is the $p\times q$-dimensional matrix with all $0$'s, and $I_p$ is the $p$-dimensional identity matrix.}

	\section{Preliminaries}\label{sec:2}

	\subsection{Game theory}	
	A non-cooperative game $\Gamma$ is defined by a triplet as follows:
	\begin{align}\label{static game obj}
		\Gamma \overset{\Delta}{=} (\mathcal{V},f_i,U_i)
	\end{align}
	where $\mathcal{V}=\{1,\cdots,N\}$ is the set of $N$ players.
	For each player $i \in  \mathcal{V}$, the action of player $i$ is denoted by  $x_i \in U_i \subset \mathbb{R}^n$ where $U_i$ is called the action space of player $i$.
	Let  $U =U_1\times U_2 \times \cdots \times U_N \subset \mathbb{R}^{Nn}$ and  ${\bm x}=\mathrm{col}(x_1,x_2,\cdots,x_N) \in \mathbb{R}^{Nn}$,  which are called the  action space and the strategy vector of the game, respectively. Then, $f_i: U \to \mathbb{R}$ is the cost function for player $i$.
	Define ${\color{black}\bm{x}_{-i}}\triangleq (x_1,x_2, \cdots, x_{i-1},x_{i+1}, \cdots, x_N)$, and $U_{-i} \overset{\Delta}{=} U_1\times \cdots \times U_{i-1} \times U_{i+1} \cdots \times U_N$.
	Then, the goal of each player $i$ is, for all ${\color{black}\bm{x}_{-i} \in U_{-i}}$,  to minimize its cost function $f_i (x_i, \bm{x}_{-i})$ over $x_i \in U_i$, that is,
	\begin{align}\label{minimize fi}
		\text{ minimize }  f_i (x_i, \bm{x}_{-i}) \hspace{3mm} \text{ subject to }  x_i \in U_i
	\end{align}
Let $\nabla_if_i(x_i,\bm{x}_{-i}) =  [\frac{\partial f_i(x_i,\bm{x}_{-i})}{\partial x_i}]^T \in\mathbb{R}^n$ be the partial derivative of $f_i$ with respect to $x_i$. Then, we call
	\begin{align}\label{psudo-gradient}
		F({\boldsymbol x})= {\col} \left( \nabla_1 f_1 (x_1,\bm{x}_{-1}), \cdots, \nabla_N f_N (x_N,\bm{x}_{-N})  \right)
	\end{align}
	the pseudo-gradient of the game $\Gamma$.

In this paper, {\color{black}we focus on aggregative games whose cost functions depend on a so-called aggregate function which is defined as follows:}
	\begin{align}\label{def sigma}
		\textcolor{black}{\check{\sigma}(\boldsymbol{x})}  \overset{\Delta}{=} \frac{1}{N}\sum_{i=1}^{N} \phi_i(x_i)
	\end{align}
where $\phi_i(\cdot): \mathbb{R}^{n}\mapsto \mathbb{R}^n$ is a continuously differentiable vector-valued function which represents
the local contribution to the aggregated quantity.
An aggregative game is a game whose cost functions $f_i(x_i,\bm{x}_{-i}) = \bar{f}_i(x_i,\check{\sigma} (\boldsymbol{x}))$ for some functions $\bar{f}_i$.
A strategy vector ${\bm x}^*=(x_i^*,\bm{x}_{-i}^*) \in U$ is  a Nash equilibrium of the aggregative game  if it is such that \begin{align}\label{aggregative NE def}
		\textcolor{black}{\bar{f}_i(x_i^*,\check{\sigma}(x_i^*,\bm{x}_{-i}^*)) \leq \bar{f}_i(x_i,\check{\sigma}(x_i,\bm{x}_{-i}^*))},~~ \forall i \in \mathcal{V}, \forall x_i \in U_i.
	\end{align}

To study the  aggregative game,
define the following functions:
	\begin{align}\label{parial gradient Ji Def}
		J_i(x_i,s_i) \overset{\Delta}{=} \nabla_y \bar{f}_i(y,s_i)|_{y=x_i} + \frac{1}{N} \nabla\phi_i(x_i)\nabla_y \bar{f}_i(x_i,y)|_{y=s_i}
	\end{align}
where $\nabla\phi_i(x_i)\in\mathbb{R}^{n\times n}$ is evaluated at $x_i$ and equals the transpose of the Jacobian matrix of vector function $\phi_i(x_i)$, i.e., $\nabla\phi_i(x_i) = (\frac{\partial \phi_i(y)}{\partial y}|_{y=x_i})^T$.
	
	Let $\bm{s} = \mathrm{col}(s_1,s_2,\cdots,s_N)\in \mathbb{R}^{Nn}$, and $\phi(\bm{x}) = \mathrm{col}(\phi_1(x_1),\phi_2(x_2),\cdots,\phi_N(x_N))\in\mathbb{R}^{Nn}$.
	Then the following operator
	\begin{align}\label{mathbf F def}
		\mathbf{F}(\boldsymbol{x},\boldsymbol{s}) = \textrm{col}(J_1(x_1,s_1),\cdots,J_N(x_N,s_N))
	\end{align}
is called the \emph{extended pseudo-gradient} operator.  Then the fact that \textcolor{black}{$f_i(x_i,\bm{x}_{-i}) = \bar{f}_i(x_i,\check{\sigma} (\boldsymbol{x}))$},
and equations \eqref{psudo-gradient}, \eqref{parial gradient Ji Def} and \eqref{mathbf F def} imply  that $\mathbf{F}(\bm{x},\bm{s}) = F(\bm{x})$ if $\textcolor{black}{\bm{s} = 1_N \otimes \check{\sigma}(\bm{x})} = (\frac{1_N 1_N^T}{N} \otimes I_n)\phi(\bm{x})$.

{\color{black} Three standard assumptions are as follows \cite{bianchi2021continuous}\cite{deng2018distributed}\cite{gadjov2020single}. }
\begin{assumption}\label{assump 1}
~~
	\begin{enumerate}[1)]
		
		\item {\color{black} For all $i \in \mathcal{V}$, } $U_i$ is nonempty, closed and convex.
		
		\item The cost function $f_i(x_i,\bm{x}_{-i})$ is convex and
		continuously differentiable in $x_i$ for every fixed $\bm{x}_{-i}\in  U_{-i}$.
		\item  The pseudo-gradient $F$ is strongly monotone on $U$, that is, for some $\mu>0$, \begin{equation}
			(\boldsymbol{x}-\boldsymbol{x'})^T(F(\boldsymbol{x})-F(\boldsymbol{x'})) \geq \mu \|\boldsymbol{x}-\boldsymbol{x'}\|^2, \forall \boldsymbol{x},\boldsymbol{x'} \in U \nonumber
		\end{equation}
	\end{enumerate}
\end{assumption}

\begin{assumption}\label{assump 2}
	The pseudo-gradient $F$ is Lipschitz continuous, i.e., for some $\theta>0$,
	\begin{equation}
		\|F(\bm{x}) - F(\bm{x'})\| \leq \theta\|\bm{x}-\bm{x'}\|, \forall \bm{x},\bm{x'} \in U  \nonumber
	\end{equation}
\end{assumption}

\begin{assumption}\label{assump 3}
	    For all $\bm{x}\in U$,
	    \begin{enumerate}[1)]
	    	\item  The extended pseudo-gradient $\mathbf{F}$ is Lipschitz continuous in its second variable, that is, for some $\hat{\theta}>0$, $\|\mathbf{F}(\bm{x},\bm{s}) - \mathbf{F}(\bm{x},\bm{s'})\|\leq \hat{\theta}\|\bm{s}-\bm{s'}\|, \forall \bm{s},\bm{s'}\in\mathbb{R}^{Nn}$.
	    	
	    	\item The Jacobian of $\phi(\bm{x})$ is globally bounded, i.e.,  $\|\frac{\partial \phi(\bm{x})}{\partial \bm{x}}\|\leq l$ for some $l>0$.
	    \end{enumerate}
\end{assumption}

    \begin{remark}\label{Rem: Fx*=0}

    	If Parts $1)$ and $2)$ of Assumption~\ref{assump 1} hold, then by \cite[Proposition~1.4.2]{facchinei2003finite},  an NE $\bm{x^*}$ exists which is such that the following variational inequality $VI(U,F)$ holds:
    	\begin{align}\label{VI def}
    		(\bm{x}-\bm{x^*})^T F(\bm{x^*}) \geq 0, \forall \bm{x}\in U
    	\end{align}		
    	Moreover, by \cite[Theorem~2.3.3 (b)]{facchinei2003finite}, Part $3)$ of Assumption~\ref{assump 1} guarantees a unique NE $\bm{x^*}$ exists.   	
    	In what follows,  we consider the global case, that is,  $U = \mathbb{R}^{Nn}$. For this case,  \eqref{VI def} implies  $F(\bm{x^*})=\bm{0}_{(Nn)\times 1}$.

    Assumption~\ref{assump 2} is weaker than the smoothness requirement in \cite[Assumption~1]{liang2017distributed} and \cite[Assumption~1]{zhu2020distributed}.
    Part $2)$ of Assumption~\ref{assump 3} includes average aggregate function $\phi_i(x_i) = x_i$ and linear weighted aggregate function $\phi_i(x_i) = A_ix_i$ with matrices $A_i\in\mathbb{R}^{n\times n}$ as special cases.
    \end{remark}

	\subsection{Graph theory}
	
The information exchange of all players of the game described in (\ref{static game obj}) can be described by a time-varying  graph\footnote{See Appendix for a summary of graph.} $\mathcal{G}_{\sigma (t)} =(\mathcal{V},\mathcal{E}_{\sigma (t)})$ with $\mathcal {V}=\{1,\dots,N\}$, $\sigma(t)$  a piece-wise constant switching signal, and $\mathcal{E}_{\sigma (t)} \subseteq\mathcal{V}\times\mathcal{V}$ for all $t \geq 0$. For any $t \geq 0$, $\mathcal{E}_{\sigma (t)}$ contains an edge $(j, i)$ if and only if the player $i$ is able to use  the information of player $j$ at time $t$.	
\textcolor{black}{We define the neighbor set of agent $i$ at time $t$ as $\mathcal{N}_i(t)=\{j\in\mathcal{V}|(j,i)\in\mathcal{E}_{\sigma(t)}\}$.}
A graph $\mathcal{G}_{\sigma(t)}$ is called weight-balanced at time $t$ if $\sum_{j\in\mathcal{V}} a_{ij}(t) = \sum_{j\in\mathcal{V}} a_{ji}(t)$ holds for all $i\in\mathcal{V}$. For any $t\geq 0, s>0$, let $\textcolor{black}{\mathcal{G}_{\sigma([t,t+s))}} = \cup_{t_i\in[t,t+s)} \mathcal{G}_{\sigma(t_i)}$.
We call $\mathcal{G}_{\sigma([t,t+s))}$ the \emph{union graph} of $\mathcal{G}_{\sigma(t)}$ over the time interval $[t,t+s)$.

	We have the following assumption regarding the communication of the players.
	\begin{assumption}\label{assump 4}
~~~
\begin{enumerate}[1)]
	    	\item  {\color{black}There exists a  positive number $T$ such that the  graph $\mathcal{G}_{\sigma ([t,t+T))}$ is connected for all $t \geq 0$.}
	    	
	    	\item  The graph $\mathcal{G}_{\sigma(t)}$ is weight-balanced for any $t\geq 0$.
	    \end{enumerate}
	\end{assumption}

	\begin{remark}
		A time-varying graph satisfying Part 1) of Assumption~\ref{assump 4} is called jointly connected.
		Under Assumption~\ref{assump 4}, the graph can be directed and disconnected at every time instant.
		Therefore, none of the existing approaches in \cite{bianchi2021continuous}\cite{de2019continuous}\cite{deng2018distributed}\cite{deng2021distributed}\cite{gadjov2020single}\cite{liang2017distributed}\cite{shakarami2022distributed}\cite{ye2016game}\cite{zhang2019distributed}\cite{zhu2020distributed} applies to this case. {\color{black} It is also interesting to note that Assumption~\ref{assump 4} implies the  graph $\mathcal{G}_{\sigma ([t,t+T))}$ is strongly connected for all $t \geq 0$\cite[Lemma~17]{wu2005algebraic}. }
\end{remark}

	\section{Main Result}\label{sec:3}	
	
Let us first propose our distributed NE seeking algorithm for player $i$ as follows:
	\begin{subequations}\label{ctrl law 4 single integrator}
		\begin{align}
			\dot{x}_i &= -\delta J_i(x_i, s_i) \label{ctrl law 4 single integrator 1} \\
			\dot{s}_i &= -\alpha(s_i-\phi_i(x_i)) - \beta\sum_{j\in \mathcal{N}_i(t)} (s_i-s_j) - \nu_i \label{ctrl law 4 single integrator 2}\\
			\dot{\nu}_i &= \alpha\beta\sum_{j\in \mathcal{N}_i(t)} (s_i-s_j)  \label{ctrl law 4 single integrator 3}
		\end{align}
	\end{subequations}
	where $s_i \in \mathbb{R}^n, \nu_i \in \mathbb{R}^n$ are two variables to ensure exact estimation of the aggregate value $\sigma(\bm{x})$, and $\delta, \alpha, \beta$ are three adjustable parameters to be specified later.
	
	\begin{remark}
	The algorithm \eqref{ctrl law 4 single integrator} is	motivated  by \cite{deng2018distributed}, which is in turn  inspired by the dynamic average consensus algorithm proposed in \cite{kia2015dynamic}. {\color{black} However, we have somehow simplified \cite[Equation~(11d)]{deng2018distributed} by removing the term  $\nabla\phi_i(x_i)$ to obtain the current form of the equation \eqref{ctrl law 4 single integrator 2}. In contrast to the original form in \cite[Equation~(4a)]{kia2015dynamic}, we also remove $\frac{d}{dt} \phi_i(x_i) = \frac{\partial \phi_i(x_i)}{\partial x_i}\dot{x}_i$ in \eqref{ctrl law 4 single integrator 2}. This change simplifies the analysis below and reduces the calculation burden.   }
\end{remark}

	Let $\bm{x} = \mathrm{col}(x_1,\cdots,x_N)\in \mathbb{R}^{Nn}, \bm{s} = \mathrm{col}(s_1,\cdots,s_N)\in \mathbb{R}^{Nn}, \bm{\nu} = \mathrm{col}(\nu_1,\cdots,\nu_N)\in \mathbb{R}^{Nn}$.
	Then, the concatenated form of \eqref{ctrl law 4 single integrator} is as follows:
	\begin{subequations}\label{compact 4 single integra}
		\begin{align}
			\dot{\bm{x}} &= -\delta \mathbf{F}(\bm{x},\bm{s})   \label{compact 4 single integra 1} \\
			\dot{\bm{s}} &= -\alpha(\bm{s}-\phi(\bm{x})) - \beta(\mathcal{L}_{\sigma(t)}\otimes I_n)\bm{s} - \bm{\nu}   \label{compact 4 single integra 2} \\
			\dot{\bm{\nu}} &= \alpha\beta(\mathcal{L}_{\sigma(t)}\otimes I_n)\bm{s} \label{compact 4 single integra 3}
		\end{align}
	\end{subequations}

	Now we concentrate on the subsystem composed of \eqref{compact 4 single integra 2} and \eqref{compact 4 single integra 3}.
	First, we define two projection matrices as follows:
	\begin{subequations}\label{project opera}
		\begin{align}
			P_n &= \frac{\bm{1}_N \bm{1}_N^T}{N} \otimes I_n  \label{project opera 1} \\
			P_n^\perp &= I_{Nn} -  \frac{\bm{1}_N \bm{1}_N^T}{N} \otimes I_n  \label{project opera 2}
		\end{align}
	\end{subequations}
	In fact, $P_n$ in \eqref{project opera 1} denotes projection onto consensus subspace of dimension $n$, and $P_n^\perp$ in \eqref{project opera 2} represents projection onto disagreement subspace of dimension $n$.
	
	Consider the following coordinate transformation:
	\begin{subequations}\label{error Def 4 bms and bmnu}
		\begin{align}
			\bar{\bm{s}} &= \bm{s}-1_N \otimes \textcolor{black}{\check{\sigma}(\bm{x})} = \bm{s}- P_n\phi(\bm{x}) \label{error Def 4 bms and bmnu 1} \\
			\bar{\bm{\nu}} &= \bm{\nu}-\alpha(I_{Nn}\!-\! \frac{\bm{1}_N \bm{1}_N^T}{N} \otimes I_n) \phi (\bm{x}) = \bm{\nu}\!-\! \alpha P_n^\perp\phi(\bm{x}) \label{error Def 4 bms and bmnu 2}
		\end{align}
	\end{subequations}
where $\bar{\bm{s}}=\textrm{col}(\bar{s}_1,\bar{s}_2,\cdots,\bar{s}_N), \bar{\bm{\nu}} = \textrm{col}(\bar{\nu}_1,\bar{\nu}_2,\cdots,\bar{\nu}_N)$.
	Then, \eqref{compact 4 single integra 2}-\eqref{compact 4 single integra 3} is equivalent to the following:
	\begin{subequations}\label{error system 4 bms and bmnu}
		\begin{align}
			\dot{\bar{\bm s}} &= -\alpha\bm{s} + \alpha \phi (\bm{x}) - \beta(\mathcal{L}_{\sigma(t)}\otimes I_n)\bm{s} - \bm{\nu}  - P_n\frac{\partial \phi(\bm{x})}{\partial \bm{x}}\dot{\bm{x}} \nonumber \\
			&= -\alpha\bar{\bm{s}} \!-\! \beta(\mathcal{L}_{\sigma(t)}\otimes I_n)\bar{\bm{s}} \!-\! \bar{\bm{\nu}} \!+\! \delta P_n\frac{\partial \phi(\bm{x})}{\partial \bm{x}}\mathbf{F}(\bm{x},\bar{\bm{s}}+ P_n  \phi (\bm{x}))  \label{error system 4 bms and bmnu 1} \\
			\dot{\bar{\bm{\nu}}} &= \alpha\beta(\mathcal{L}_{\sigma(t)}\otimes I_n)( \bar{\bm{s}} + {\color{black} \bm{1}_N \otimes \check{\sigma}(\bm{x}) } )  - \alpha P_n^\perp\frac{\partial \phi(\bm{x})}{\partial \bm{x}}\dot{\bm{x}} \notag \\
			&=  \alpha\beta(\mathcal{L}_{\sigma(t)}\otimes I_n)\bar{\bm{s}} + \delta\alpha P_n^\perp\frac{\partial \phi(\bm{x})}{\partial \bm{x}}\mathbf{F}(\bm{x},\bar{\bm{s}}+ P_n  \phi (\bm{x}))  \label{error system 4 bms and bmnu 2}
		\end{align}
	\end{subequations}
	The Jacobian linearization  of \eqref{error system 4 bms and bmnu} at the origin is as follows:
	\begin{subequations}\label{linear part without xdot}
		\begin{align}
			\dot{\bar{\bm s}} &= -\alpha\bar{\bm{s}} - \beta(\mathcal{L}_{\sigma(t)}\otimes I_n)\bar{\bm{s}} - \bar{\bm{\nu}} \label{linear part without xdot 1} \\
			\dot{\bar{\bm{\nu}}} &= \alpha\beta(\mathcal{L}_{\sigma(t)}\otimes I_n)\bar{\bm{s}}  \label{linear part without xdot 2}
		\end{align}
	\end{subequations}

	Let  $r = \frac{\bm{1}_N}{\sqrt{N}}\in \mathbb{R}^N$.  Then, there exists $R \in \mathbb{R}^{N \times (N-1)}$ such that $R^TR = I_{N-1}$ and $R^Tr = \bm{0}_{(N-1)\times 1}$. That is,   the matrix $\mathcal{Q} = \begin{bmatrix}
		r & R
	\end{bmatrix}$ is an orthogonal matrix.
	Let $r_\otimes = r\otimes I_n, R_\otimes = R\otimes I_n, \mathcal{Q}_\otimes  = \mathcal{Q}\otimes I_n$.
	We further define the following coordinate transformation
\begin{subequations}\label{transformation}
\begin{align}
\bm{y} &= \mathcal{Q}_\otimes^T\bar{\bm{s}} = \begin{bmatrix}
		\bm{y}_1 \\
 \bm{y}_2
	\end{bmatrix} \\
\bm{z} &= \mathcal{Q}_\otimes^T\bar{\bm{\nu}}= \begin{bmatrix}
		\bm{z}_1 \\
 \bm{z}_2
	\end{bmatrix}
\end{align}
	\end{subequations}
 with $\bm{y}_1,\bm{z}_1\in\mathbb{R}^n, \bm{y}_2,\bm{z}_2\in\mathbb{R}^{Nn-n}$.
    {\color{black} Suppose Part 2) of Assumption~\ref{assump 4} is satisfied. Then,  for all $t\geq 0$,  $\bm{1}_N^T\mathcal{L}_{\sigma(t)} = \bm{0}_{1\times N}$, which implies
 $r^T_\otimes  (\mathcal{L}_{\sigma(t)}\otimes I_n) = \bm{0}_{n\times (nN)}$.} Thus, system \eqref{linear part without xdot} is equivalent to the following
	\begin{subequations}\label{linear part with y1y2z1z2}
		\begin{align}
			\dot{\bm{y}}_1 &= -\alpha\bm{y}_1 - \beta r^T_\otimes  (\mathcal{L}_{\sigma(t)}\otimes I_n)\bar{\bm s}  - \bm{z}_1 = -\alpha\bm{y}_1  - \bm{z}_1   \label{linear part with y1y2z1z2 1} \\
			\dot{\bm{y}}_2 &= -\alpha\bm{y}_2 - \beta ((R^T\mathcal{L}_{\sigma(t)}R)\otimes I_n) \bm{y}_2 - \bm{z}_2   \label{linear part with y1y2z1z2 2} \\
			\dot{\bm{z}}_1 &= \alpha\beta r^T_\otimes  (\mathcal{L}_{\sigma(t)}\otimes I_n) \bar{\bm s}  = \bm{0}_{n\times 1} \label{linear part with y1y2z1z2 3} \\
			\dot{\bm{z}}_2 &= \alpha\beta ((R^T\mathcal{L}_{\sigma(t)}R)\otimes I_n) \bm{y}_2 \label{linear part with y1y2z1z2 4}
		\end{align}
	\end{subequations}	

	To study the stability property of \eqref{linear part with y1y2z1z2}, consider   the following ancillary system:
	\begin{align}\label{linear switching sys with zeta}
		\dot{\bm{\zeta}} &= A(t)\bm{\zeta}
	\end{align}
	where $\bm{\zeta}=\mathrm{col}(\zeta_1,\zeta_2,\zeta_3) $ with $\zeta_1 \in \mathbb{R}^{n}$, $\zeta_2,  \zeta_3 \in \mathbb{R}^{Nn-n}$, and
	\begin{align}\label{linear switching sys 4 At}
		A(t) \!=\! \begin{bmatrix}
			-\alpha I_n \!&\! \bm{0} \!&\! \bm{0}\\
			\bm{0} \!&\! -\alpha I_{Nn-n}\!-\!\beta( R^T\mathcal{L}_{\sigma(t)}R)\!\otimes \!I_n \!&\! -I_{Nn-n} \\
			\bm{0} \!&\! \alpha\beta ((R^T\mathcal{L}_{\sigma(t)}R)\otimes I_n) \!&\! \bm{0}
		\end{bmatrix}
	\end{align}

	We first establish the following lemma.
	\begin{lemma}\label{lem: exponential stable}
	 {\color{black}	Under  Part $1)$ of Assumption~\ref{assump 4}, } the origin of the linear switched system~\eqref{linear switching sys with zeta} is exponentially stable.
	\end{lemma}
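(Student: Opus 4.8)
The plan is to decouple the switched system \eqref{linear switching sys with zeta} into the trivial $\zeta_1$-block, which is obviously exponentially stable since $\dot\zeta_1 = -\alpha I_n \zeta_1$, and the coupled $(\zeta_2,\zeta_3)$-block driven by the time-varying matrix $R^T\mathcal{L}_{\sigma(t)}R$. The whole difficulty is concentrated in this second block. I would first record the structural facts I need: under weight-balancedness (Part~2 of Assumption~\ref{assump 4}), each $\mathcal{L}_{\sigma(t)}$ satisfies $\mathcal{L}_{\sigma(t)} + \mathcal{L}_{\sigma(t)}^T \succeq 0$, so the symmetric part $M(t) := R^T\big(\tfrac12(\mathcal{L}_{\sigma(t)}+\mathcal{L}_{\sigma(t)}^T)\big)R$ is positive semidefinite and uniformly bounded (finitely many switching modes). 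The key consequence of joint connectedness (Part~1) is the integral lower bound: there exist $\gamma>0$ and $T>0$ such that $\int_t^{t+T} M(\tau)\,d\tau \succeq \gamma I_{Nn-n}$ for all $t\ge 0$, because the union of the Laplacians over any window of length $T$ is the Laplacian of a connected (hence, being weight-balanced, strongly connected) graph whose symmetric part has a uniform spectral gap on the subspace orthogonal to $\bm 1_N$, and $R$ is precisely a basis for that subspace.

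Next I would pick the Lyapunov candidate suggested by the block structure, $V(\zeta_2,\zeta_3) = \tfrac12\alpha\beta\,\zeta_2^T\zeta_2 + \tfrac12\,\zeta_3^T\zeta_3$ (treating the $\otimes I_n$ factors as understood), whose derivative along \eqref{linear switching sys 4 At} collapses to $\dot V = -\alpha^2\beta\,\zeta_2^T\zeta_2 - \alpha\beta\,\zeta_2^T(M(t)\otimes I_n)\zeta_2 \le -\alpha^2\beta\|\zeta_2\|^2$, since the cross terms coupling $\zeta_2$ and $\zeta_3$ cancel and $M(t)\succeq 0$. This shows $V$ is nonincreasing and gives an $L^2$-type bound on $\zeta_2$, but it does not by itself yield exponential decay of $\zeta_3$ — that is the familiar obstruction for jointly connected (as opposed to every-time connected) networks, and it is exactly why the paper invokes converse Lyapunov theory at the next level. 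To get exponential stability of the ancillary system I would argue as follows: $\zeta_2(t)\to 0$ and, integrating $\dot V$, $\int_0^\infty \|\zeta_2\|^2 < \infty$; then from $\dot\zeta_3 = \alpha\beta(M_{\mathrm{skew}}+M)(t)\zeta_2$-type dynamics (more precisely $\dot\zeta_3 = \alpha\beta((R^T\mathcal{L}_{\sigma(t)}R)\otimes I_n)\zeta_2$), $\zeta_3$ is a bounded integral of an $L^1\cap L^2$ signal, hence converges; and then a persistency-of-excitation / averaging argument using the integral lower bound $\int_t^{t+T}(M(\tau)\otimes I_n)\,d\tau \succeq \gamma I$ forces the limit of $\zeta_3$ to be zero and upgrades convergence to exponential. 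Concretely, I expect to invoke a standard result on exponential stability of linear systems of the "gradient-like with PE regressor" form (e.g., the classical Anderson/Morgan–Narendra PE lemma, or Lemma-type results on cascaded consensus systems over jointly connected graphs as in the averaging literature cited in the paper), after verifying the uniform boundedness of $A(t)$ and the PE condition on $M(t)$.

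An alternative, and probably cleaner, route is to avoid the PE lemma and instead construct an explicit strict Lyapunov function by a cross-term perturbation: set $W = V + \varepsilon\,\rho(t)\,\zeta_2^T\zeta_3$ (or $W = V - \varepsilon\,\zeta_3^T(\int_t^{t+T}(R^T\mathcal{L}_{\sigma(\tau)}R)\,d\tau\otimes I_n)\zeta_3$ using the windowed-integral matrix as a time-varying weight), choose $\varepsilon$ small, and show $\dot W \le -c W$ using the integral lower bound to dominate the indefinite terms; this is the technique used for consensus over jointly connected digraphs and it transfers directly here because the $(\zeta_2,\zeta_3)$-subsystem is structurally the same as a second-order consensus error system. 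Either way, the main obstacle is the same: joint (as opposed to instantaneous) connectivity means the decay of $\zeta_3$ is not pointwise-in-time visible, so the crux of the proof is to convert the window-integrated spectral gap $\int_t^{t+T}M(\tau)\,d\tau \succeq \gamma I$ into a genuine exponential rate, and I would budget most of the write-up for that conversion, treating the $\zeta_1$-block and the uniform bounds on $A(t)$ as routine.
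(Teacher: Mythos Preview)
Your primary route has a concrete computational error, and both routes miss the paper's one-line simplification.

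\textbf{The error.} With $V=\tfrac12\alpha\beta\|\zeta_2\|^2+\tfrac12\|\zeta_3\|^2$ and $L(t):=(R^T\mathcal{L}_{\sigma(t)}R)\otimes I_n$, the derivative along \eqref{linear switching sys with zeta} is
\[
\dot V=-\alpha^2\beta\|\zeta_2\|^2-\alpha\beta^2\,\zeta_2^T L(t)\zeta_2-\alpha\beta\,\zeta_2^T\zeta_3+\alpha\beta\,\zeta_3^T L(t)\zeta_2 .
\]
The last two cross terms do \emph{not} cancel (they would only if $L(t)=I$), so your claimed $\dot V$ and the ensuing $L^2$/PE program do not start. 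More structurally, the skew pairing you are relying on would require $\dot\zeta_3$ to contain $+c\,\zeta_2$ against the $-\zeta_3$ in $\dot\zeta_2$; here $\dot\zeta_3=\alpha\beta L(t)\zeta_2$, so no constant quadratic form produces cancellation when $L(t)$ is time-varying and non-symmetric. You also invoke weight-balancedness (Part~2 of Assumption~\ref{assump 4}) to get $M(t)\succeq 0$, but the lemma is stated under Part~1) only, and the paper's proof does not use Part~2) here.

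\textbf{The missed simplification.} The paper bypasses PE/averaging entirely with the change of variable $w=\alpha\zeta_2+\zeta_3$. One line gives $\dot w=\alpha\dot\zeta_2+\dot\zeta_3=-\alpha w$, so $w$ decays exponentially; substituting $\zeta_3=w-\alpha\zeta_2$ into the $\zeta_2$-equation yields
\[
\dot\zeta_2=-\beta L(t)\zeta_2-w .
\]
The unforced part $\dot{\tilde\zeta}_2=-\beta L(t)\tilde\zeta_2$ is exactly the disagreement component of the consensus system $\dot{\hat\zeta}=-\beta(\mathcal{L}_{\sigma(t)}\otimes I_n)\hat\zeta$, which is exponentially stable under joint connectedness alone by Moreau's result; then $\zeta_2$ is an exponentially stable linear system driven by an exponentially vanishing input, and a cascade argument finishes. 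Thus the $(\zeta_2,\zeta_3)$-block triangularizes into two trivially exponential pieces, and neither a strict Lyapunov construction nor a PE lemma is needed for the lemma itself (the converse Lyapunov theorem is only invoked later, in the proof of Theorem~\ref{thm: main 1}).
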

	\begin{proof}
		Let  $\hat{\zeta} = \textrm{col}(\hat{\zeta}_1, \hat{\zeta}_2,\cdots, \hat{\zeta}_N)$ with $\hat{\zeta}_i \in\mathbb{R}^{n}$, $i = 1, \cdots, N$. Then, we first consider the following subsystem:
\begin{align}\label{prof of lem exponential stable 1}
			\dot{\hat{\zeta}}=-\beta(\mathcal{L}_{\sigma(t)}\otimes I_n)\hat{\zeta}
		\end{align}
Under Part $1)$ of Assumption~\ref{assump 4},   by \cite[Corollary~2.1]{cai2022cooperative}, which in turn follows from \cite[Theorem~1]{moreau2004stability},
all $\hat{\zeta}_i$ converge to a common vector $\xi\in\mathbb{R}^n$ exponentially as $t\to +\infty$, that is, $\lim\limits_{t\to +\infty} (\hat{\zeta}(t) - \bm{1}_N\otimes \xi) = \bm{0}_{(Nn)\times 1}$ exponentially.
	
Next, define coordinate transformation $\tilde{\zeta} = \textrm{col}(\tilde{\zeta}_1, \tilde{\zeta}_2) = \begin{bmatrix}
			r_\otimes^T \\ R_\otimes^T
		\end{bmatrix}\hat{\zeta}$.
	{\color{black}Then, using the property that $\mathcal{L}_{\sigma(t)}\bm{1}_N = \bm{0}_{N\times 1}$ for $t\geq 0$,  \eqref{prof of lem exponential stable 1} is transformed to the following form:
		\begin{subequations}\label{prof of lem exponential stable 2}
					\begin{align}
						\dot{\tilde{\zeta}}_1 &= -\beta((r^T\mathcal{L}_{\sigma(t)}R)\otimes I_n)\tilde{\zeta}_2  \label{prof of lem exponential stable 2(1)} \\
						\dot{\tilde{\zeta}}_2 &= -\beta((R^T\mathcal{L}_{\sigma(t)}R)\otimes I_n)\tilde{\zeta}_2 \label{prof of lem exponential stable 2(2)}
					\end{align}
		\end{subequations}

We now show the origin of \eqref{prof of lem exponential stable 2(2)} is exponentially stable. }
Note that,
\begin{align*}
			\tilde{\zeta}_2(t) &=  R_\otimes^T  \hat{\zeta}(t) \\
&= R_\otimes^T (\hat{\zeta}(t) -  \bm{1}_N\otimes \xi ) + R_\otimes^T (\bm{1}_N\otimes \xi) \\
& = R_\otimes^T (\hat{\zeta}(t) -  \bm{1}_N\otimes \xi )
			\end{align*}
\textcolor{black}{where the third equality follows from $R_\otimes^T (\bm{1}_N\otimes \xi) = (R^T\bm{1}_N)\otimes (I_n\xi) = \bm{0}_{(Nn-n)\times 1}$. }	
		
Thus, for any initial condition $\tilde{\zeta}_2(0)\in\mathbb{R}^{Nn-n}$,
		\begin{align*}
			\lim\limits_{t\to +\infty} \tilde{\zeta}_2(t) &=  R_\otimes^T\lim\limits_{t\to +\infty} \hat{\zeta}(t) \\
&= R_\otimes^T\lim\limits_{t\to +\infty} (\hat{\zeta}(t) -  \bm{1}_N\otimes \xi )  \\
& =  \bm{0}_{(Nn-n) \times 1}
		\end{align*}
exponentially,  which means $\tilde{\zeta}_2$ tends to the origin exponentially as $t$ goes to infinity.
		
		We now  show that the origin of the linear switched system~\eqref{linear switching sys with zeta} is exponentially stable.
		For this purpose, let $w=\alpha\zeta_2+\zeta_3$. Then
		\eqref{linear switching sys with zeta} is transformed to the following form:
\begin{subequations}\label{prof of lem exponential stable 3}
			\begin{align}
				\dot{{\zeta}}_1 &= - \alpha {\zeta}_1 \label{prof of lem exponential stable 3(1)} \\
				\dot{\zeta}_2 &= -\beta((R^T\mathcal{L}_{\sigma(t)}R)\otimes I_n)\zeta_2 - w \label{prof of lem exponential stable 3(2)} \\
			\dot{w} &= - \alpha w \label{prof of lem exponential stable 3(3)}
\end{align}
\end{subequations}
Thus, both ${\zeta}_1$ and
 $w$ vanish exponentially. Since \eqref{prof of lem exponential stable 3(2)}
 can be viewed as an exponentially stable linear system  perturbed by an exponentially vanishing input $w$,
by \cite[Corollary~2.4]{cai2022cooperative} or \cite[Lemma~1]{liu2018leader} , we have $\lim\limits_{t\to +\infty} \zeta_2(t) = \bm{0}_{(Nn-n)\times 1}$ exponentially.
The proof is thus complete.		
	\end{proof}

	\begin{remark}\label{Rem: differ Kia}
		It is  interesting to compare  Lemma~\ref{lem: exponential stable} with \cite[Lemma~4.4]{kia2015dynamic} where it was  showed that $\bar{s}_i$ in \eqref{linear part without xdot 1} and $\bar{\nu}_i$ in \eqref{linear part without xdot 2} achieve consensus exponentially, respectively.
First,  in \cite{kia2015dynamic}, $\phi_i (x_i) = x_i$ with $x_i$ a scalar while  our Lemma~\ref{lem: exponential stable} works for a more general $\phi_i (x_i)$ with $x_i$ a vector. Thus, Lemma~\ref{lem: exponential stable} here has somehow extended \cite[Lemma~4.4]{kia2015dynamic}. Moreover,
 we further showed that a reduced system governing  only $\bm{y}_1, \bm{y}_2$ and $\bm{z}_2$ is exponentially stable. This result is crucial for constructing the Lyapunov function~\eqref{prof thm1 4 Lyapunov v1 plus v2} for the system (\ref{prof thm1 4 modified closed-loop}) in the proof of Theorem~\ref{thm: main 1} later. 
\end{remark}

	Now we are ready to establish our main result which makes use of  the converse Lyapunov theorem based on Lemma~\ref{lem: exponential stable}.

	\begin{theorem}\label{thm: main 1}
		Under Assumptions~\ref{assump 1} - \ref{assump 4}, there exists a constant $\delta^* > 0$ such that, for any $0<\delta<\delta^*, \alpha,\beta>0$, any $x_i(0)\in \mathbb{R}^n, s_i(0)\in \mathbb{R}^n$, and $\sum_{i=1}^{N} \nu_i(0) = \bm{0}_{n\times 1}$, the solution of the  system \eqref{compact 4 single integra} is bounded over $t\geq 0$ and satisfies:
		\begin{subequations}\label{main thm 1 converge 4 single integra}
			\begin{align}
				\lim_{t\to+\infty} \bm{x}(t) &= \bm{x^*} \label{main thm 1 converge 4 single integra 1} \\
				\lim_{t\to+\infty} \bm{s}(t) &= P_n\phi(\bm{x^*}) = (\frac{\bm{1}_N \bm{1}_N^T}{N} \otimes I_n) \phi(\bm{x^*}) \label{main thm 1 converge 4 single integra 2} \\
				\lim_{t\to+\infty} \boldsymbol{\nu}(t) &= \alpha P_n^\perp\phi(\bm{x^*}) = \alpha(I_{Nn}- \frac{\bm{1}_N \bm{1}_N^T}{N} \otimes I_n)\phi(\bm{x^*}) \label{main thm 1 converge 4 single integra 3}
			\end{align}
		\end{subequations}
all exponentially.
	\end{theorem}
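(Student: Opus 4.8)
The plan is to rewrite the closed loop \eqref{compact 4 single integra}, in the error coordinates $\bar{\bm{s}},\bar{\bm{\nu}}$ of \eqref{error Def 4 bms and bmnu} and $\bm{y},\bm{z}$ of \eqref{transformation}, as the exponentially stable ancillary system \eqref{linear switching sys with zeta} driven by an $O(\delta)$ interconnection with the pseudo-gradient flow $\dot{\bm{x}}=-\delta\mathbf{F}(\bm{x},\bm{s})$, and then to close a composite Lyapunov argument: a converse Lyapunov function for \eqref{linear switching sys with zeta} (which exists by Lemma~\ref{lem: exponential stable}) plus the quadratic $\tfrac12\|\bm{x}-\bm{x}^*\|^2$, with $\delta$ taken small enough that the cross terms are dominated.

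First I would identify the invariant direction. Let $\bm{z}_1=r_\otimes^T\bar{\bm{\nu}}$ as in \eqref{transformation}. Using weight-balancedness, $r_\otimes^T(\mathcal{L}_{\sigma(t)}\otimes I_n)=\bm{0}$ and $r_\otimes^T P_n^\perp=\bm{0}$, so the $\bm{z}_1$-component of \eqref{error system 4 bms and bmnu 2} reduces to $\dot{\bm{z}}_1=\bm{0}$; since $\sum_{i=1}^{N}\nu_i(0)=\bm{0}$ gives $\bm{z}_1(0)=r_\otimes^T\bm{\nu}(0)=\bm{0}$, we get $\bm{z}_1(t)\equiv\bm{0}$. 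Hence, with $\tilde{\bm{x}}=\bm{x}-\bm{x}^*$ and $\bm{\zeta}=\mathrm{col}(\bm{y}_1,\bm{y}_2,\bm{z}_2)$, the closed loop reads $\dot{\tilde{\bm{x}}}=-\delta\mathbf{F}(\bm{x},\bm{s})$ and $\dot{\bm{\zeta}}=A(t)\bm{\zeta}+\delta\,g(t,\tilde{\bm{x}},\bm{\zeta})$, where $A(t)$ is exactly \eqref{linear switching sys 4 At} (the $O(\delta)$ forcing in the $\bm{y}_2$-row vanishes because $R_\otimes^T P_n=\bm{0}$) and $g=\mathrm{col}\big(r_\otimes^T\tfrac{\partial\phi}{\partial\bm{x}}\mathbf{F},\ \bm{0},\ \alpha R_\otimes^T\tfrac{\partial\phi}{\partial\bm{x}}\mathbf{F}\big)$. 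Using $F(\bm{x}^*)=\bm{0}$, $\mathbf{F}(\bm{x},P_n\phi(\bm{x}))=F(\bm{x})$, Assumption~\ref{assump 2}, Part~1) of Assumption~\ref{assump 3} (so that $\|\mathbf{F}(\bm{x},\bm{s})-F(\bm{x})\|\le\hat\theta\|\bar{\bm{s}}\|=\hat\theta\|\bm{y}\|$) and Part~2) of Assumption~\ref{assump 3} (so that $\|\tfrac{\partial\phi}{\partial\bm{x}}\|\le l$), one obtains $\|\mathbf{F}(\bm{x},\bm{s})\|\le\theta\|\tilde{\bm{x}}\|+\hat\theta\|\bm{\zeta}\|$ and therefore $\|g(t,\tilde{\bm{x}},\bm{\zeta})\|\le c_0(\|\tilde{\bm{x}}\|+\|\bm{\zeta}\|)$ for a constant $c_0$ depending only on $l,\alpha,\theta,\hat\theta$.

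Next, since $\mathcal{L}_{\sigma(t)}$ takes values in a finite set, $A(t)$ is uniformly bounded, and by Lemma~\ref{lem: exponential stable} (under Part~1) of Assumption~\ref{assump 4}) the origin of $\dot{\bm{\zeta}}=A(t)\bm{\zeta}$ is uniformly exponentially stable; hence a converse Lyapunov theorem for exponentially stable linear switched systems---for instance $V_\zeta(t,\bm{\zeta})=\bm{\zeta}^T P(t)\bm{\zeta}$ with $P(t)=\int_t^{+\infty}\Phi^T(\tau,t)\Phi(\tau,t)\,d\tau$, $\Phi$ the state transition matrix---provides constants $0<c_1\le c_2$, $c_3>0$, $c_4>0$ such that $c_1\|\bm{\zeta}\|^2\le V_\zeta\le c_2\|\bm{\zeta}\|^2$, $\partial_t V_\zeta+\partial_{\bm{\zeta}}V_\zeta\,A(t)\bm{\zeta}\le-c_3\|\bm{\zeta}\|^2$, and $\|\partial_{\bm{\zeta}}V_\zeta\|\le c_4\|\bm{\zeta}\|$. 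Take $V=V_\zeta+\tfrac12\|\tilde{\bm{x}}\|^2$. Along the closed loop, Part~3) of Assumption~\ref{assump 1} together with $F(\bm{x}^*)=\bm{0}$ gives $\tfrac{d}{dt}\tfrac12\|\tilde{\bm{x}}\|^2\le-\delta\mu\|\tilde{\bm{x}}\|^2+\delta\hat\theta\|\tilde{\bm{x}}\|\,\|\bm{\zeta}\|$, while $\dot V_\zeta\le-c_3\|\bm{\zeta}\|^2+\delta c_4 c_0\|\bm{\zeta}\|(\|\tilde{\bm{x}}\|+\|\bm{\zeta}\|)$; applying Young's inequality to the cross terms and then choosing a sufficiently small $\delta^*>0$ (depending on $\alpha,\beta$, the graph data, and $\mu,\theta,\hat\theta,l,c_1,\dots,c_4$) yields $\dot V\le-2\gamma V$ for all $0<\delta<\delta^*$ and some $\gamma>0$. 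This implies global existence and uniform boundedness of solutions together with $\|\tilde{\bm{x}}(t)\|^2+\|\bm{\zeta}(t)\|^2\le\kappa e^{-2\gamma t}\big(\|\tilde{\bm{x}}(0)\|^2+\|\bm{\zeta}(0)\|^2\big)$. Finally, $\bar{\bm{s}}=\mathcal{Q}_\otimes\mathrm{col}(\bm{y}_1,\bm{y}_2)$, $\bar{\bm{\nu}}=\mathcal{Q}_\otimes\mathrm{col}(\bm{0},\bm{z}_2)$, and $\|\phi(\bm{x})-\phi(\bm{x}^*)\|\le l\|\tilde{\bm{x}}\|$ (by Part~2) of Assumption~\ref{assump 3}) turn this into the three exponential limits in \eqref{main thm 1 converge 4 single integra} after undoing \eqref{error Def 4 bms and bmnu}.

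The main obstacle---and the whole reason for going through Lemma~\ref{lem: exponential stable}---is producing the Lyapunov function $V_\zeta$: because the network is only jointly connected, there is no connectivity-based quadratic that works at each instant, so one must first establish exponential stability of \eqref{linear switching sys with zeta} and then invoke the converse theorem, taking care that $A(t)$ is bounded and, crucially, that the resulting $V_\zeta$ comes equipped with the gradient bound $\|\partial_{\bm{\zeta}}V_\zeta\|\le c_4\|\bm{\zeta}\|$ (not merely the decay estimate), since it is exactly this bound that allows the $O(\delta)$ coupling $g$ to be absorbed. The remaining work---the perturbation estimates from Assumptions~\ref{assump 2}--\ref{assump 3} and the Young's-inequality bookkeeping that fixes $\delta^*$---is routine.
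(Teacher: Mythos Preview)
Your proposal is correct and follows essentially the same route as the paper: the same error/orthogonal coordinates, the same identification of $\bm{z}_1\equiv\bm{0}$ from the initial condition and weight-balancedness, the same converse-Lyapunov construction $P(t)=\int_t^{\infty}\Phi^T(\tau,t)Q\Phi(\tau,t)\,d\tau$ for the ancillary system \eqref{linear switching sys with zeta}, and the same composite Lyapunov function $V=\bm{\zeta}^TP(t)\bm{\zeta}+\tfrac12\|\bm{x}-\bm{x}^*\|^2$. The only cosmetic differences are that the paper absorbs the cross terms via a $2\times 2$ positive-definiteness check on a matrix $B(\delta)$ (rather than Young's inequality) and extracts from it an explicit formula for $\delta^*$.
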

	\begin{proof}			
		Let $\textrm{col}(\bm{x},\bar{\bm{s}},\bar{\bm{\nu}})$ be governed by \eqref{compact 4 single integra 1}, \eqref{error system 4 bms and bmnu 1},  and \eqref{error system 4 bms and bmnu 2}, respectively. Let
\begin{align}
\begin{bmatrix}
				  \bm{z}_1 \\	\bm{y} \\ \bm{z}_2
				\end{bmatrix} &=
\begin{bmatrix}
	                 \bm{0}_{n\times (Nn)} &  r_\otimes^T  \\
					\mathcal{Q}_\otimes^T & \bm{0}_{Nn}   \\
                     \bm{0}_{(Nn-n) \times (Nn)}    & R_\otimes^T
				\end{bmatrix}
\begin{bmatrix}
					\bar{\bm{s}} \\ \bar{\bm{\nu}}
				\end{bmatrix}
\end{align}
Then, \textcolor{black}{under Part 2) of Assumption~\ref{assump 4}}, using \eqref{linear switching sys 4 At} and the fact that $R_\otimes^T  P_n=\bm{0}_{(Nn-n) \times (Nn)}$  and $r_\otimes^TP^\perp_n=\bm{0}_{n \times (Nn)}$, \eqref{compact 4 single integra 1}, \eqref{error system 4 bms and bmnu 1},  and \eqref{error system 4 bms and bmnu 2} are transformed to the following:
\begin{subequations}\label{prof thm1 4 modified closed-loop}
			\begin{align}
				\dot{\bm{x}} &= -\delta\mathbf{F}(\bm{x},\mathcal{Q}_\otimes\bm{y}+P_n\phi(\bm{x})) \label{prof thm1 4 modified closed-loop x}  \\
				\dot{\bm{z}}_1 &=  \bm{0}_{n\times 1}   \label{prof thm1 4 modified closed-loop z1} \\
				\frac{d}{dt}\!\begin{bmatrix}
					\bm{y}_1\\ \bm{y}_2 \\ \bm{z}_2
				\end{bmatrix} &= A(t)\!\begin{bmatrix}
					\bm{y}_1\\ \bm{y}_2 \\ \bm{z}_2
				\end{bmatrix}   \nonumber  \\
&\hspace{4mm} \!+\! \begin{bmatrix}
					\delta r_\otimes^T\frac{\partial \phi(\bm{x})}{\partial \bm{x}}\mathbf{F}(\bm{x}, \mathcal{Q}_\otimes\bm{y} \!+\! P_n\phi(\bm{x})) \!-\! \bm{z}_1 \\
					\bm{0}_{(Nn-n)\times 1}   \\
					\delta\alpha R_\otimes^T\frac{\partial \phi(\bm{x})}{\partial \bm{x}}\mathbf{F}(\bm{x}, \mathcal{Q}_\otimes\bm{y} \!+\! P_n\phi(\bm{x}))
				\end{bmatrix}    \label{prof thm1 4 modified closed-loop y1y2z2}
			\end{align}
		\end{subequations}
		
Since $\sum_{i=1}^{N} \nu_i(0) = \bm{0}_{n\times 1}$, we have
		\begin{align}\label{main thm1 4 z10=0}
			\bm{z}_1(0) &= r_\otimes^T\bar{\bm{\nu}}(0) = r_\otimes^T(\bm{\nu}(0)\!-\!\alpha P_n^\perp\phi(\bm{x}(0))) = r_\otimes^T\bm{\nu}(0) = \bm{0}_{n\times 1}
		\end{align}
Thus, by \eqref{prof thm1 4 modified closed-loop z1},  we have $\bm{z}_1 (t) =\bm{0}_{n\times 1}$ for all $t \geq 0$.
As a result, the linear part  of the subsystem~\eqref{prof thm1 4 modified closed-loop y1y2z2} is given by (\ref{linear switching sys with zeta}), whose equilibrium at the origin is exponentially stable by Lemma \ref{lem: exponential stable}.
Let $\Phi(\tau,t)$ be the state transition matrix of (\ref{linear switching sys with zeta}). Then there exist some positive constants $\gamma$ and $\lambda$ such that
		\begin{align}
			\|\Phi(\tau,t)\| \leq \gamma e^{-\lambda(\tau-t)}, \forall \tau\geq t
		\end{align}
		Define $P(t) = \int_{t}^{\infty} \Phi^T(\tau,t)Q\Phi(\tau,t)d\tau$ with $Q$ being some constant positive definite matrix.
		Then, similar to \cite[Lemma~3.1]{liu2017adaptive}, one can verify that $P(t)$ is continuous for all $t\geq 0$, and it is positive definite and decrescent in the sense that there exist constants $c_1, c_2>0$ such that
		\begin{align}\label{main thm1 4 time-varying pd and decrescent V}
			c_1\|v\|^2\leq v^T P(t)v\leq c_2\|v\|^2
		\end{align}
		It implies that $\|P(t)\|\leq p$ with some positive constant $p$ for all $t\geq 0$.		
		Note that $\mathcal{L}_{\sigma(t)}$ is a piece-wise constant matrix with the range of $\sigma(t)$ being a finite set $\mathcal{P}=\{1,2,\cdots,n_0\}$.
		Thus $A(t)$ in \eqref{linear switching sys 4 At} is bounded over $[0,+\infty)$ and continuous on each time interval $[t_{j},t_{j+1})$ for $j=0,1,\cdots$.
		Therefore, for $t\in[t_j,t_{j+1}), j=0,1,2,\cdots$, the following holds \cite[Theorem~4.12]{khalil002}:
		\begin{align}\label{time-varing Sylvester eqn}
			-\dot{P}(t) = A(t)^TP(t) + P(t)A(t) + Q
		\end{align}

		Let $\bar{\bm{x}} = \bm{x}-\bm{x^*}, \bm{\xi}=\mathrm{col}(\bm{y},\bm{z}_2)$, $V_1(\bar{\bm{x}}) =  \frac{1}{2}\|\bm{x}-\bm{x^*}\|^2$, and
 $V_2(\bm{\xi},t) = \bm{\xi}^T	P(t) \bm{\xi}$. Then, we define a time-varying Lyapunov function candidate for system \eqref{prof thm1 4 modified closed-loop} as follows:
		\begin{align}\label{prof thm1 4 Lyapunov v1 plus v2}
			V(\bar{\bm{x}},\bm{\xi},t) &= V_1(\bar{\bm{x}}) + V_2(\bm{\xi}, t)
			\end{align}	
		The derivative of $V_1$ along \eqref{prof thm1 4 modified closed-loop x} satisfies
		\begin{align}\label{prof thm1 4 derivative of V1}
			\dot{V}_1 &= (\bm{x}-\bm{x^*})^T(-\delta\mathbf{F}(\bm{x},\mathcal{Q}_\otimes\bm{y}+P_n\phi(\bm{x}))) \notag \\
			&\overset{(a)}{=}  -\delta(\bm{x}-\bm{x^*})^T(\mathbf{F}(\bm{x},\mathcal{Q}_\otimes\bm{y}+P_n\phi(\bm{x}))-\mathbf{F}(\bm{x},P_n\phi(\bm{x}))) \nonumber \\ & \hspace{5mm}- \delta(\bm{x}-\bm{x^*})^T(F(\bm{x})-F(\bm{x^*})) \nonumber \\
			&\overset{(b)}{\leq}  \delta\hat{\theta}\|\bar{\bm{x}}\|\|\mathcal{Q}_\otimes\bm{y}\| -  \delta\mu\|\bar{\bm{x}}\|^2 \nonumber \\
			&\overset{(c)}{\leq} \delta\hat{\theta}\|\bar{\bm{x}}\|\|\bm{\xi}\| - \delta\mu\|\bar{\bm{x}}\|^2
		\end{align}	
		where equality $(a)$ follows from $\mathbf{F}(\bm{x}, P_n\phi(\bm{x})) = \mathbf{F}(\bm{x},(\frac{\bm{1}_N \bm{1}_N^T}{N} \otimes I_n)\phi(\bm{x})) = F(\bm{x})$ by \eqref{mathbf F def} and $F(\bm{x^*})=\bm{0}_{(Nn)\times 1}$ by Remark~\ref{Rem: Fx*=0}, inequality $(b)$ follows from Part $3)$ of  Assumption~\ref{assump 1} and part $1)$ of Assumption~\ref{assump 3}, and inequality $(c)$ follows from $\|\mathcal{Q}_\otimes\bm{y}\|=\|\bm{y}\|\leq\|\begin{bmatrix}
			\bm{y}\\\bm{z}_2
		\end{bmatrix}\|$.

			On the other hand, for any $t\in[t_j,t_{j+1}), j=0,1,2,\cdots$, taking derivative of $V_2$ with respect to \eqref{prof thm1 4 modified closed-loop y1y2z2} yields
			\begin{align}\label{prof thm1 4 derivative of V2}
				\dot{V}_2
				=& \begin{bmatrix}
					\bm{y}^T & \bm{z}_2^T
				\end{bmatrix}(A(t)^TP(t)+P(t)A(t)+\dot{P}(t))\begin{bmatrix}
					\bm{y} \\ \bm{z}_2
				\end{bmatrix} \nonumber \\
				&  + 2\begin{bmatrix}
					\bm{y}^T & \bm{z}_2^T
				\end{bmatrix}P(t)\begin{bmatrix}
					\delta r_\otimes^T\frac{\partial \phi(\bm{x})}{\partial \bm{x}}\mathbf{F}(\bm{x}, \mathcal{Q}_\otimes\bm{y}+ P_n\phi(\bm{x}))\\
					\bm{0}_{(Nn-n)\times 1} \\ \delta\alpha R_\otimes^T\frac{\partial \phi(\bm{x})}{\partial \bm{x}}\mathbf{F}(\bm{x}, \mathcal{Q}_\otimes\bm{y}+ P_n\phi(\bm{x}))
				\end{bmatrix} \notag \\
				\overset{(a)}{\leq}&   -\bm{\xi}^TQ\bm{\xi} + \notag \\
				& \hspace{-1.5mm} 2\delta\sqrt{\alpha^2\!+\!1}\|\bm{\xi}\|\|P(t)\|\|\mathcal{Q}_\otimes^T\frac{\partial \phi(\bm{x})}{\partial \bm{x}}\|\|\mathbf{F}(\bm{x},\mathcal{Q}_\otimes\bm{y}\!+\!P_n\phi(\bm{x}))\| \notag \\
				\overset{(b)}{\leq}& -\lambda_{min}(Q)\|\bm{\xi}\|^2 \!+\! 2pl\delta\sqrt{\alpha^2+1}\|\bm{\xi}\|(\hat{\theta}\|\bm{y}\|\!\!+\!\!\theta\|\bm{\bar{x}}\|)
			\end{align}
			where inequality $(a)$ follows from \eqref{time-varing Sylvester eqn}, and inequality $(b)$ follows from following facts: $\|\mathcal{Q}_\otimes^T\| = 1, \|\mathbf{F}(\bm{x},\mathcal{Q}_\otimes\bm{y}+P_n\phi(\bm{x}))\| \leq \|\mathbf{F}(\bm{x},\mathcal{Q}_\otimes\bm{y}+P_n\phi(\bm{x}))-\mathbf{F}(\bm{x},P_n\phi(\bm{x}))\| + \|\mathbf{F}(\bm{x},P_n\phi(\bm{x}))-\mathbf{F}(\bm{x^*},P_n\phi(\bm{x^*}))\| \leq \hat{\theta}\|\bm{y}\|+\theta\|\bm{\bar{x}}\|$ by Remark~\ref{Rem: Fx*=0}, Assumption~\ref{assump 2} and Part $1)$ of Assumption~\ref{assump 3}, $\|\frac{\partial \phi(\bm{x})}{\partial \bm{x}}\|\leq l$ for all $\bm{x}\in\mathbb{R}^{Nn}$ by Part $2)$ of Assumption~\ref{assump 3}, and $\|P(t)\|\leq p$ for all $t\geq 0$.

			Define $M = 2pl\sqrt{\alpha^2+1} >0$.
			Then \eqref{prof thm1 4 derivative of V2} can be further simplified as follows:
			\begin{align}\label{prof thm1 4 simplfied derivative of V2}
				\dot{V}_2 &\leq -\lambda_{min}(Q)\|\bm{\xi}\|^2 + \delta M\hat{\theta}\|\bm{\xi}\|^2 + \delta M\theta\|\bm{\xi}\|\|\bm{\bar{x}}\| \notag \\
				&\leq  -(\lambda_{min}(Q)-\delta M\hat{\theta})\|\bm{\xi}\|^2 + \delta M\theta\|\bm{\xi}\|\|\bm{\bar{x}}\|
			\end{align}

			Substituting \eqref{prof thm1 4 derivative of V1} and \eqref{prof thm1 4 simplfied derivative of V2} into $\dot{V} = \dot{V}_1 + \dot{V}_2$ gives
			\begin{align}\label{prof thm1 4 total derivative}
				&\dot{V} =  \dot{V}_1 + \dot{V}_2 \notag \\
				\leq& - \delta \begin{bmatrix}
					\|\bar{\bm{x}}\| \\ \|\bm{\xi}\|
				\end{bmatrix}^T\underbrace{\begin{bmatrix}
						\mu & -\frac{(\hat{\theta}+M\theta)}{2} \\
						-\frac{(\hat{\theta}+M\theta)}{2} & \frac{\lambda_{min}(Q)}{\delta} -  M\hat{\theta}
				\end{bmatrix}}_{B(\delta)}\begin{bmatrix}
					\|\bar{\bm{x}}\| \\ \|\bm{\xi}\|
				\end{bmatrix}
			\end{align}
			Since $\mu > 0$,  the matrix $B(\delta)$ in \eqref{prof thm1 4 total derivative} is positive definite if $\mu(\frac{\lambda_{min}(Q)}{\delta}- M\hat{\theta})-\frac{(\hat{\theta}+M\theta)^2}{4} >0.$
			Therefore, we can select the positive constant $\delta^*$ as
			\begin{align}\label{exact expre 4 delta* of single integra}
				\delta^* = \frac{4\mu\lambda_{min}(Q)}{(\hat{\theta}+M\theta)^2+4\mu M\hat{\theta}}
			\end{align}
			such that, for any $0<\delta<\delta^*$,  $\dot{V} \leq -\delta\lambda_{min}(B(\delta))\|\begin{bmatrix}
				\|\bar{\bm{x}}\| \\ \|\bm{\xi}\|
			\end{bmatrix} \|^2$.
			Since both $V(\bar{\bm{x}},\bm{\xi},t)$  and $- \dot{V}(\bar{\bm{x}},\bm{\xi},t)$ are positive definite quadratic functions in $\mbox{col~} (\bar{\bm{x}},\bm{\xi})$, we have
$\lim\limits_{t\to +\infty} \bar{\bm{x}}(t) = \bm{0}_{(Nn)\times 1}, \lim\limits_{t\to +\infty} \bm{y}(t) = \bm{0}_{(Nn)\times 1}, \lim\limits_{t\to +\infty} \bm{z}_2(t) = \bm{0}_{(Nn-n)\times 1}$ all exponentially.
			Therefore, we have
			\begin{align}\label{main thm1 4 respective final converg x}
				\lim\limits_{t\to +\infty} \bm{x}(t) = \lim\limits_{t\to +\infty} (\bar{\bm{x}}(t) + \bm{x^*}) = \bm{x^*}
			\end{align}
			exponentially, which means the strategy vector $\bm{x}$ tends to the NE exponentially as $t$ goes to infinity.
			Since $\bm{z}_1(t)$ is identically zero for all $ t \geq 0$, using \eqref{error Def 4 bms and bmnu}, \eqref{transformation} and  \eqref{main thm1 4 respective final converg x},  we can further obtain 	
			\begin{subequations}\label{main thm1 4 respective final converg s and nu}
				\begin{align}
					&\lim\limits_{t\to +\infty} \bm{s}(t) = \lim\limits_{t\to +\infty} (\mathcal{Q}_\otimes\bm{y}(t) + P_n\phi(\bm{x}(t))) = P_n\phi(\bm{x^*}) \\
					&\lim\limits_{t\to +\infty} \bm{\nu}(t) = \lim\limits_{t\to +\infty} (\mathcal{Q}_\otimes\bm{z}(t) + \alpha P_n^\perp\phi(\bm{x}(t))) = \alpha P_n^\perp\phi(\bm{x^*})
				\end{align}
			\end{subequations}
both exponentially.
		\end{proof}

		\begin{remark}
			Reference \cite{nian2021distributed} studied general multi-cluster game problem over switching networks.
			For comparison, let  $m_i=1$ for all $i\in\mathcal{V}$ in \cite{nian2021distributed}, that is,  there is only one player in each cluster.
			Then   the first line of \cite[Equation~(21)]{nian2021distributed} can be put as follows:
			\begin{align}\label{nian2021 same as yemaojiao switching }
				\dot{\bm{\hat{q}}} = -\delta\textrm{diag}\{c_{ij}\}((\mathcal{L}_{\sigma(t)}\otimes I_N+A_0)\hat{\bm{q}} + r_0)
			\end{align}
			where $\bm{\hat{q}}=\textrm{col}(\hat{q}_{11},\cdots,\hat{q}_{1N},\cdots,\hat{q}_{NN})$ with $\hat{q}_{ij}$ being player $i$'s estimate of action of player $j$, $c_{ij}>0$ are the gains, and $r_0=-\textrm{col}(a_{11}^{\sigma(t)}q_1,a_{12}^{\sigma(t)}q_2,\cdots,a_{1N}^{\sigma(t)}q_N,\cdots,a_{NN}^{\sigma(t)}q_N)$.
			One immediately realizes that it is the same as the second equation of \cite[Equation~(9)]{ye2017switching}. Thus,  by \cite[Remark~3.4]{he2023Neurocomput}, the approach in \cite{nian2021distributed}  only applies to every time strongly connected networks.		
		\end{remark}

		\begin{remark}\label{deng}
		Our result also applies to the special case where the networks are static, connected and weight-balanced as studied in, for example,  \cite{deng2018distributed} and \cite{deng2021distributed}. Even for this special case,  our result offers a few advantages as follows:
 \begin{enumerate}
 \item
 Our algorithm relaxes the restrictive initial condition $\nu_i(0)=\bm{0}_{n\times 1} (i\in\mathcal{V})$ in \cite[Equation~(11e),(13e)]{deng2018distributed} and \cite[Equation~(7e),(24e)]{deng2021distributed} to merely $\sum_{i=1}^N\nu_i(0) = \bm{0}_{n\times 1}$, which significantly enlarges the feasible set of initial conditions.
  \item The validity of \cite[Lemma~1]{deng2018distributed} or \cite[Lemma~1]{deng2021distributed} relies on a crucial assumption that  $\phi_i(x_i)$ in \eqref{ctrl law 4 single integrator 2} are either  constants  or exponentially converge to some constants, which may not be verifiable since $\bm{x}$ dynamics and $\bm{s}-\bm{\nu}$ dynamics are coupled in \eqref{compact 4 single integra}. In contrast, we do not need such an assumption.
			
\item We gave an explicit upper bound $\delta^*$  in \eqref{exact expre 4 delta* of single integra} for  the design parameter $\delta$ while \cite{deng2018distributed}\cite{deng2021distributed} only assumed the existence
of the upper bound $\delta^*$.
\end{enumerate}
\end{remark}

		\section{Conclusion}\label{sec:6}
		In this paper, we have studied the problem  of the distributed Nash equilibrium seeking  for  aggregative games over jointly connected and weight-balanced switching networks. The existing approaches critically rely on the connectedness of the graph for constructing a Lyapunov function for their algorithms and theses approaches fail if the network is not connected. To overcome this difficulty,  we have first established the exponential stability for a time-varying ancillary system. Then, by the converse Lyapunov theorem, we obtain a time-varying quadratic Lyapunov function for the ancillary system,  which in turn leads to the construction of a suitable Lyapunov function for the closed-loop system, thus leading to the solution of the problem.

		{\appendix
A time-varying graph is denoted by $\mathcal{G}(t) = (\mathcal{V},\mathcal{E}(t))$, where
$\mathcal{V} = \{1,\cdots,N\}$ is the node set, and $\mathcal{E}(t)\subseteq \mathcal{V}\times \mathcal{V}$ is the
edge set.
If there is an edge from node $j$ to node $i$, then $(j,i)\in \mathcal{E}(t)$, and we say $j$ is a neighbor of $i$ at time $t$.

A subset of $\mathcal{E}(t)$ of the form $\{(i_1,i_2),\cdots,(i_{k-1},i_k)\}$ is said to be a directed path from node $i_1$ to node $i_k$, and in this case, we say node $i_1$ can reach node $i_k$ at time $t$. The graph $\mathcal{G}(t)$ is said to be static if $\mathcal{G}(t)=\mathcal{G}(0)$ for all $t\geq 0$.
A static graph is denoted by $\mathcal{G}(\mathcal{V},\mathcal{E})$.
A static graph is said to be \emph{connected} if there is a node that can reach every other node, and is said to be \emph{strongly connected} if there is a directed path between any two nodes.
An edge $(i,j)$ is called an undirected edge if $(i,j)\in \mathcal{E} \Leftrightarrow (j,i)\in \mathcal{E}$.
$\mathcal{G}$ is called an undirected graph if every edge in $\mathcal{E}$ is undirected.
The adjacency matrix of a graph $\mathcal{G}$ is a non-negative matrix $\mathcal{A} = [a_{ij}]\in \mathbb{R}^{N\times N}$ where, for $i,j=1,\cdots,N, a_{ij}=1$ if there is an edge from node $j$ to node $i$
and $a_{ij}=0$ if otherwise. 
Since, for $i\in\mathcal{V}$, there is no such edge as $(i,i)$, we have $a_{ii}=0$.
For $i=1,\cdots,N$, let $d_i^{in} = \sum_{j=1}^N a_{ji}$ and $d_i^{out} = \sum_{j=1}^N a_{ij}$, which are called the in-degree and out-degree of node $i$, respectively. Let $D = \mathrm{diag}(d_1^{out},\cdots,d_N^{out})$, which is called the degree matrix of $\mathcal{G}$. The matrix $\mathcal{L} = D - \mathcal{A}$ is called Laplacian of $\mathcal{G}$ corresponding to $\mathcal{A}$.

A time function $\sigma: [0,+\infty) \mapsto \mathcal{P}=\{1,\cdots,n_0\}$ with
$n_0$ being some positive integer is said to be a piece-wise
constant switching signal if there exists a sequence $\{t_j:j=0,1,\cdots\}$ satisfying $t_0 = 0$ and $t_{j+1}-t_j\geq \tau$ for some positive constant $\tau$ such that, for all $t\in [t_j,t_{j+1}), \sigma(t) = p$ for some $p\in \mathcal{P}$.
$\mathcal{P}$ is called the switching index set, $t_j$ is called the switching instant, and $\tau$ is called the dwell time.

Given a set of $r$ graphs $\{\mathcal{G}_i = (\mathcal{V}, \mathcal{E}_i), i=1,\cdots,r\}$, the graph $\mathcal{G} = (\mathcal{V},\mathcal{E})$ where $\mathcal{E} = \cup_{i=1}^r \mathcal{E}_i$ is called the union of graphs $\mathcal{G}_i$, denoted by $\mathcal{G} = \cup_{i=1}^r \mathcal{G}_i$.

Given a piece-wise constant switching signal $\sigma(t)$ and a set of $n_0$ static graphs $\mathcal{G}_i = (\mathcal{V}, \mathcal{E}_i), i =1,\cdots,n_0$, one can define a time-varying graph $\mathcal{G}_{\sigma(t)} = (\mathcal{V}, \mathcal{E}_{\sigma(t)})$, which is called a \emph{switching graph}.
}

	\end{document}